\definecolor{MyGreen}{rgb}{0.13,0.55,0.13}
\theoremstyle{plain}
\newtheorem{theorem}{Theorem}
\newtheorem{lemma}[theorem]{Lemma}
\newtheorem{corollary}[theorem]{Corollary}
\newtheorem{proposition}[theorem]{Proposition}
\theoremstyle{definition}
\newtheorem{remark}[theorem]{Remark} 
\numberwithin{equation}{section}
\newcommand{\R}{\mathbb R}
\newcommand{\N}{\mathbb N}
\newcommand{\Q}{\mathbb Q}
\newcommand{\pr}{\beta}
\newcommand{\B}{\boldsymbol{\beta}}
\newcommand{\Dig}{\mathrm{Dig}}
\newcommand{\Per}{\mathrm{Per}}
\newcommand{\ceil}[1]{\left\lceil#1\right\rceil}
\newcommand{\floor}[1]{\lfloor#1\rfloor}
\newcommand{\Int}{[\![0,p-1]\!]}
\newcommand{\DB}{d_{\boldsymbol{\beta}}}
\newcommand{\DBi}[1]{d_{{\B}^{(#1)}}}
\newcommand{\qDBi}[1]{d_{\boldsymbol{\beta}^{(#1)}}^{*}}
\title{On periodic alternate base expansions}
\author{Émilie Charlier$^*$, Célia Cisternino and Savinien Kreczman\\
\\
\tiny{Department of Mathematics,
University of Li\`ege,\\
All\'ee de la D\'ecouverte 12,
4000 Li\`ege, Belgium
}}
\thanks{\emph{E-mail address:} \texttt{echarlier@uliege.be}, \texttt{celiacisternino@gmail.com} and \texttt{savinien.kreczman@uliege.be}\\
$^*$Corresponding author.}
\begin{document}

\begin{abstract}
For an alternate base $\B=(\beta_0,\ldots,\beta_{p-1})$, we show that if all rational numbers in the unit interval $[0,1)$ have periodic expansions with respect to the $p$ shifts of $\B$, then the bases $\beta_0,\ldots,\beta_{p-1}$ all belong to the extension field $\Q(\pr)$ where $\beta$ is the product $\beta_0\cdots\beta_{p-1}$ and moreover, this product $\beta$ must be either a Pisot number or a Salem number. We also prove the stronger statement that if the bases $\beta_0,\ldots,\beta_{p-1}$ belong to $\Q(\pr)$ but the product $\beta$ is neither a Pisot number nor a Salem number then the set of rationals having an ultimately periodic $\B$-expansion is nowhere dense in $[0,1)$. Moreover, in the case where the product $\beta$ is a Pisot number and the bases $\beta_0,\ldots,\beta_{p-1}$ all belong to $\Q(\beta)$, we prove that the set of points in $[0,1)$ having an ultimately periodic $\B$-expansion is precisely the set $\Q(\beta)\cap[0,1)$. For the restricted case of Rényi real bases, i.e., for $p=1$ in our setting, our method gives rise to an elementary proof of Schmidt's original result. Therefore, even though our results generalize those of Schmidt, our proofs should not be seen as generalizations of Schmidt's original arguments but as an original method in the generalized framework of alternate bases, which moreover gives a new elementary proof of Schmidt's results from 1980. 
As an application of our results, we show that if $\B=(\beta_0,\ldots,\beta_{p-1})$ is an alternate base such that the product $\beta$ of the bases is a Pisot number and $\beta_0,\ldots,\beta_{p-1}\in\Q(\beta)$, then $\B$ is a Parry alternate base, meaning that the quasi-greedy expansions of $1$ with respect to the $p$ shifts of the base $\B$ are ultimately periodic. 
\end{abstract}

\maketitle

\bigskip
\hrule
\bigskip

\noindent 2010 {\it Mathematics Subject Classification}: 11K16, 11R06

\noindent \emph{Keywords: 
Expansion of real numbers,
Alternate base,
Periodicity,
Spectrum,
Pisot number,
Salem number
}

\bigskip
\hrule
\bigskip

\section{Introduction}

The real base expansions of real numbers were introduced by Rényi~\cite{Renyi1957}. Given a real base $\beta>1$, a representation of a real number $x\in[0,1)$ is an infinite sequence $(a_n)_{n\in\N}$ of non-negative integer digits such that $x=\sum_{n= 0}^\infty\frac{a_n}{\beta^{n+1}}$. Choosing at each step the largest possible digit $a_n$ so that the partial sum $\sum_{k= 0}^n\frac{a_k}{\beta^{k+1}}$ does not exceed $x$, we obtain one particular $\beta$-representation of $x$ called the \emph{$\beta$-expansion of $x$} and denoted by $d_\beta(x)$. Rényi observed that the digits of the $\beta$-expansion of $x$ can also obtained by iterating the so-called \emph{$\beta$-transformation} $T_\beta\colon[0,1)\to [0,1),\ x\mapsto \beta x-\floor{\beta x}$, where $\floor{\cdot}$ denotes the floor function. More precisely, the computation of the $n$-th digit is given by the formula $a_n=\floor{\beta T_\beta^n(x)}$. Then Rényi showed, among other things, that the map $T_\beta$ defines an ergodic dynamical system. The dynamical properties of the $\beta$-expansions were extensively studied since the seminal work of Rényi. 

In particular, the \emph{$\beta$-shift} $S_\beta$ received a lot of attention. This set is defined as the topological closure (with respect to the product topology on infinite words) of the set $\{d_\beta(x) :  x\in[0,1)\}$. It is shift invariant and it defines a dynamical system that is measure theoretically isomorphic to the dynamical system built on $T_\beta$. Parry provided a combinatorial characterization of elements in the $\beta$-shift~\cite{Parry1960} involving one particular infinite word $d_\beta^*(1)$, which is nowadays called the \emph{quasi-greedy $\beta$-expansion of $1$} and which is defined as the limit of the sequences $d_\beta(x)$ as $x$ tends to $1^-$, that is, $d_\beta^*(1)=\lim_{x\to 1^-}d_\beta(x)$. Ito and Takahashi then showed that the $\beta$-shift $S_\beta$ is of finite type (a property they call \emph{markovian}) if and only if $d_\beta^*(1)$ is purely periodic~\cite{Ito&Takahashi:1974}. Further, Bertrand-Mathis showed that the $\beta$-shift $S_\beta$ is sofic if and only if $d_\beta^*(1)$ is ultimately periodic~\cite{Bertrand-Mathis1986}. From these results, we see the importance of the particular infinite word $d_\beta^*(1)$ in the study of $\beta$-expansions of Rényi. Nowadays, real bases $\beta$ such that $d_\beta^*(1)$ is ultimately periodic are called \emph{Parry numbers}. 

In~\cite{Schmidt1980}, Schmidt studied the set $\Per(\beta)$ of ultimately periodic points of the $\beta$-transfor\-mation $T_\beta$. In particular, his results imply that all Pisot numbers, i.e., algebraic integers $\beta>1$ whose Galois conjugates (that is, the roots of the minimal polynomial of $\beta$) distinct from $\beta$ all have modulus less than $1$, are Parry numbers. The aim of the present paper is to understand the set of real numbers $x\in[0,1)$ having an ultimately periodic alternate base expansion.

Alternate base expansions of real numbers are a generalization of Rényi $\beta$-expansions~\cite{CharlierCisternino2021}. We give here the necessary background in order to state the generalization of Schmidt's result that we seek. An \emph{alternate base} $\B=(\beta_0,\ldots,\beta_{p-1})$ is a $p$-tuple of real bases, that is, $\beta_i>1$ for every $i\in\Int$ (throughout this text, an interval of integers $\{i, \ldots , j\}$ with $i\le j$ is denoted $[\![i,j]\!]$). A \emph{$\B$-representation} of a real number $x$ is an infinite sequence $a=(a_n)_{n\in \N}$ of integers such that 
\begin{equation}
\label{Eq : valueAlternatBase}
	x=\sum_{m=0}^\infty \sum_{i=0}^{p-1} \frac{a_{mp+i}}{(\beta_0\cdots\beta_{p-1})^m\beta_0\cdots\beta_i}.
\end{equation}
We use the convention that for all $n\in \N$, $\beta_n=\beta_{n \bmod p}$ and 
\[
	\B^{(n)}=(\beta_n,\ldots,\beta_{n+p-1}).
\]

For $x\in[0,1)$, a distinguished $\B$-representation $(\varepsilon_n)_{n\in \N}$, called the \emph{$\B$-expansion} of $x$, is obtained from the \emph{greedy algorithm}: set $r_0=x$ and, for $n\in \N$, $\varepsilon_n=\floor{\beta_n r_n}$ and $r_{n+1}=\beta_n r_n-\varepsilon_n$. The $\B$-expansion of $x$ is denoted $\DB(x)$. The $n$-th digit $\varepsilon_n$ belongs to $[\![0,\ceil{\beta_n}-1]\!]$. The number $r_n$ is called the $n$-th \emph{remainder} computed by the greedy algorithm. Note that the remainders all belong to $[0,1)$.

We let $\Per(\B)$ denote the set of real numbers in $[0,1)$ having an ultimately periodic greedy $\B$-expansion, that is, 
\begin{equation}\label{Eq : DefPerB}
\Per(\B)=\{ x\in [0,1) : \DB(x) \text{ is ultimately periodic}\}.
\end{equation}
As in the real base case, the digits of the $\B$-expansion may also be obtained by iterating a well-chosen transformation $T_{\B}$ \cite{CharlierCisterninoDajani2023}. The set $\Per(\B)$ may then be seen, up to some technicalities, as the set of ultimately periodic points of this map $T_{\B}$.

The main goal of this paper is to prove the following result generalizing Schmidt's theorems~\cite[Theorems 2.4 and 3.1]{Schmidt1980}. Recall that a \emph{Salem number} is an algebraic integer $\beta>1$ whose Galois conjugates distinct from $\beta$ all have modulus less than or equal to $1$, with equality for at least one of them.

\begin{theorem}
\label{Thm : OurSchmidt}
Let $\B=(\beta_0,\ldots,\beta_{p-1})$ be an alternate base and set $\pr=\prod_{i=0}^{p-1}\beta_i$. 
\begin{enumerate}
\item If $\Q\cap [0,1)\subseteq \bigcap_{i=0}^{p-1}\Per(\B^{(i)})$ then $\beta_0,\ldots,\beta_{p-1} \in \Q(\pr)$ and $\pr$ is either a Pisot number or a Salem number.
\label{OurSchmidt:1}
\item If $\pr$ is a Pisot number and $\beta_0,\ldots,\beta_{p-1} \in \Q(\pr)$ then $ \Per(\B)=\Q(\pr)\cap [0,1)$.
\label{OurSchmidt:2}
\end{enumerate}
\end{theorem}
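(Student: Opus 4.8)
The plan is to mirror the classical theory of $\beta$-expansions but carried out simultaneously across the $p$ shifts of the alternate base, exploiting the arithmetic structure of $\Q(\pr)$. The key observation is that, by formula~\eqref{Eq : valueAlternatBase}, iterating the greedy algorithm on $x$ produces remainders $r_0,r_1,r_2,\ldots$ lying in $[0,1)$, and the remainder $r_{mp}$ after a full period of $p$ digits is obtained from $r_0$ by multiplying by $\pr$ and subtracting an integer combination of the products $\beta_0\cdots\beta_i$. The expansion $\DB(x)$ is ultimately periodic precisely when the sequence of remainders $(r_n)_{n\in\N}$ is ultimately periodic, and since $r_{n+1}=\beta_n r_n-\varepsilon_n$ with $\varepsilon_n$ an integer, ultimate periodicity of the remainders is equivalent to the sequence $(r_n)$ taking only finitely many distinct values. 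Thus everything reduces to controlling the size of the orbit $\{r_n:n\in\N\}$.

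For part~\eqref{OurSchmidt:2} I would argue both inclusions. For $\Per(\B)\subseteq\Q(\pr)\cap[0,1)$, note that if $x\in\Per(\B)$ then all remainders $r_n$ lie in $\Q(\pr)$ (each is obtained from the previous by an affine map with coefficients in $\Q(\pr)$, using $\beta_0,\ldots,\beta_{p-1}\in\Q(\pr)$ and integer digits, starting from $r_0=x\in\Q(\pr)$), and a periodic orbit forces $x$ itself to be rational in $\Q(\pr)$; more carefully, a standard geometric-series summation of the eventually periodic expansion expresses $x$ as an element of $\Q(\pr)$. The reverse inclusion $\Q(\pr)\cap[0,1)\subseteq\Per(\B)$ is the substantive direction: given $x\in\Q(\pr)\cap[0,1)$, I would use the Pisot property of $\pr$ to bound the orbit. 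Write each remainder $r_n$ in terms of a fixed denominator $q$ and express it through the embeddings of $\Q(\pr)$ into $\C$ determined by the Galois conjugates of $\pr$. Since each $r_n\in[0,1)$, its image under the principal embedding is bounded; under the conjugate embeddings, the Pisot condition (all conjugates of $\pr$ of modulus $<1$) makes the corresponding ``conjugate remainders'' contract, so they remain uniformly bounded as well. The numerators of the $r_n$ over the common denominator then lie in a fixed bounded region of the ring of integers, hence take finitely many values, forcing the orbit to be finite and the expansion ultimately periodic.

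The main obstacle, and the technical heart of the argument, is making this conjugate-boundedness precise in the alternate-base setting, because the bases $\beta_0,\ldots,\beta_{p-1}$ individually need not be algebraic integers nor lie in $\Z[\pr]$; only their product $\pr$ is controlled. To handle this I expect to need a careful choice of denominator and a Pisot-type estimate applied not to single steps but to full blocks of $p$ steps, where the block map multiplies by $\pr$. The per-digit steps within a block must be shown to keep the conjugate quantities in a bounded set, which requires that the digits $\varepsilon_n$ be bounded (they are, since $\varepsilon_n\le\ceil{\beta_n}-1$) and that the partial products $\beta_0\cdots\beta_i$ and their conjugate analogues be controlled inside $\Q(\pr)$. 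This is where the hypothesis $\beta_0,\ldots,\beta_{p-1}\in\Q(\pr)$ is essential: it guarantees all the relevant quantities live in a single number field on which the conjugate embeddings act coherently, so that the contraction coming from the Pisot product can be leveraged block by block.

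I would finally remark that specializing to $p=1$ recovers exactly Schmidt's theorem and its classical proof via the Pisot property, which confirms that the alternate-base argument is the natural generalization; the case analysis for Salem numbers and the necessity direction in part~\eqref{OurSchmidt:1} (not needed for this second statement) would be handled separately using the growth of conjugates rather than their contraction.
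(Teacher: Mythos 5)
Your treatment of part~\eqref{OurSchmidt:2} is correct in outline, and it takes a genuinely different route from the paper. The paper disposes of the inclusion $\Q(\pr)\cap[0,1)\subseteq\Per(\B)$ in a few lines by observing that the remainders $r_{mp}$ lie in the spectrum $X^\Delta(\pr)$ over $\Delta=-\Dig(\B)\cup\Dig(\B)\cup\{x\}$ and invoking \cite[Proposition 24]{CharlierCisterninoMasakovaPelantova2023}, which says this spectrum has no accumulation point when $\pr$ is Pisot and the digits lie in $\Q(\pr)$; injectivity of $(r_{mp})_{m\in\N}$ (Proposition~\ref{Pro : Equivalences1}) then gives the contradiction. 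You instead re-derive the underlying finiteness directly: using the block recursion~\eqref{Eq : rn+1 rn}, $r_{(m+1)p}=\pr r_{mp}-\eta_m$ with $\eta_m$ ranging over the finite set $\Dig(\B)\subset\Q(\pr)$, choose $q\in\N$ with $qx\in\Z[\pr]$ and $q\eta\in\Z[\pr]$ for all $\eta\in\Dig(\B)$; then $qr_{mp}\in\Z[\pr]$, the real embedding is bounded by $q$, and each conjugate embedding is bounded by a geometric series since the conjugates of $\pr$ have modulus $<1$, so the $r_{mp}$ take finitely many values and pigeonhole yields periodicity. The obstacle you flag (the individual $\beta_i$ need not be algebraic integers) dissolves exactly as you anticipated, because the block map involves only $\pr$ and $\Dig(\B)$; the only care needed is that pigeonhole must be applied at a fixed phase modulo $p$ — a coincidence $r_n=r_{n'}$ with $n\not\equiv n'\pmod p$ does not force periodicity, which is precisely why the paper works with $(r_{mp})_{m\in\N}$ via Proposition~\ref{Pro : Equivalences1}. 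This self-contained argument essentially reproves the spectrum proposition in the special case needed, which is a legitimate trade: more work in the proof, no external citation.

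There is, however, a genuine gap: part~\eqref{OurSchmidt:1} is not addressed beyond one sentence (``handled separately using the growth of conjugates''), and that sentence misdescribes what is needed. The conclusions $\beta_0,\ldots,\beta_{p-1}\in\Q(\pr)$ and ``$\pr$ is an algebraic integer'' are not hypotheses in item~\eqref{OurSchmidt:1}; they must be \emph{extracted} from $\Q\cap[0,1)\subseteq\bigcap_{i=0}^{p-1}\Per(\B^{(i)})$, and your sketch contains no mechanism for this. The paper does it via Theorem~\ref{Thm : GeneralizationPrague}: for each shift $\B^{(i)}$ one constructs a rational $\frac{1}{q_i}$ whose (ultimately periodic, by hypothesis) $\B^{(i)}$-expansion begins with $0^{m_ip}1$, and the $p$ resulting series relations are converted into a polynomial system whose matrix form shows $\pr$ is an algebraic integer (determinant with leading coefficient $(-1)^p$) and, via irreducibility and Perron--Frobenius, that each $\beta_i\in\Q(\pr)$. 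This is the step that consumes the hypothesis about \emph{all} $p$ shifts, which your proposal never uses. Moreover, once these facts are available, ruling out a conjugate $\gamma\neq\pr$ with $|\gamma|>1$ is not a conjugate-growth estimate but an approximation argument: one applies the isomorphism $\psi$ to the ultimately periodic block-digit sequences (Lemma~\ref{Lem:psi-periodique}) of rationals in $[\frac{1}{\pr},\frac{1}{\pr}+\frac{1}{\pr^M})$, whose expansions force $\eta_0=1$ and $\eta_1=\cdots=\eta_{M-1}=0$, and lets $M\to\infty$ to conclude $\frac{1}{\gamma}=\frac{1}{\pr}$. Note that without $\beta_i\in\Q(\pr)$ one cannot even apply $\psi$ to the digits $\eta_m\in\Dig(\B)$. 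As written, your proposal is a viable plan for item~\eqref{OurSchmidt:2} only.
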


Our proof of Theorem~\ref{Thm : OurSchmidt} is based on algebraic tools such as the alternate base spectrum defined in~\cite{CharlierCisterninoMasakovaPelantova2023} as a generalization of the $\beta$-spectrum originally introduced by Erd\H os, Jo\'o and Komornik~\cite{ErdosJooKomornik1990}. A crucial step in our method is a generalization of a result from~\cite{CharlierCisterninoMasakovaPelantova2023}, which is Theorem~\ref{Thm : GeneralizationPrague} below. In the reduced case of one real base, we obtain a proof that is much shorter than Schmidt's original one from~\cite{Schmidt1980}. Therefore, even though the statement of Theorem~\ref{Thm : OurSchmidt} generalizes Schmidt's results, the proof we give should not be seen as a generalization of Schmidt's original arguments. The new elementary proof of Schmidt's results that we obtain will be provided explicitly in Section~\ref{Sec:p=1}.
 
Note that the algebraic condition that the bases $\beta_0,\ldots,\beta_{p-1}$ all belong to the extension field $\Q(\pr)$ is trivially satisfied whenever $p=1$, that is, in the original case of Rényi's expansions. This condition already appeared in~\cite{CharlierCisterninoMasakovaPelantova2023}. In that work, the aim was to obtain algebraic descriptions of alternate bases $\B=(\beta_0,\dots,\beta_{p-1})$ defining a sofic \emph{$\B$-shift}, where for an alternate base $\B$, the $\B$-shift is defined as the topological closure of the set $\bigcup_{i=0}^{p-1}\{\DBi{i}(x) \colon x\in [0,1)\}$. In~\cite{CharlierCisternino2021}, a combinatorial characterization of the sofic $\B$-shifts was obtained; namely, the $\B$-shift is sofic if and only if $\qDBi{i}(1)$ is eventually periodic for every $i\in\Int$, where $\qDBi{i}(1)=\lim_{x\to 1^-}\DBi{i}(x)$. It was then shown in~\cite{CharlierCisterninoMasakovaPelantova2023} that a necessary condition for the $\B$-shift to be sofic is that the product $\pr=\prod_{i=0}^{p-1}\beta_i$ is an algebraic integer and all of the bases $\beta_0,\ldots,\beta_{p-1}$ belong to $\Q(\pr)$. On the other hand, it was also shown that if the product $\pr$ is a Pisot number and $\beta_0,\ldots,\beta_{p-1}\in\Q(\pr)$ then the $\B$-shift is sofic.

In view of the previously mentioned results describing the $\B$-shift, we call $\B$ a \emph{Parry alternate base} if $\qDBi{i}(1)$ is eventually periodic for every $i\in\Int$. As a direct consequence of Theorem~\ref{Thm : OurSchmidt}, we reobtain the above-mentioned result from~\cite{CharlierCisterninoMasakovaPelantova2023} generalizing the fact that all Pisot numbers are Parry numbers. Note that in the case $p=1$, this result was independently obtained by both Bertrand and Schmidt~\cite{Bertrand1977,Schmidt1980}.

\begin{corollary}
\label{Cor : PisotimpliqueParry}
Let $\B=(\beta_0,\ldots,\beta_{p-1})$ be an alternate base and set $\pr=\prod_{i=0}^{p-1}\beta_i$. If $\pr$ is a Pisot number and $\beta_0,\ldots,\beta_{p-1} \in \Q(\pr)$ then $\B$ is a \emph{Parry alternate base}.
\end{corollary}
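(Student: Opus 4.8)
The plan is to fix an index $i\in\Int$ and prove that $\qDBi{i}(1)$ is eventually periodic by relating it to the greedy expansion of a point lying in $\Q(\pr)\cap[0,1)$, to which part \ref{OurSchmidt:2} of Theorem~\ref{Thm : OurSchmidt} can be applied. First I would make sense of the greedy expansion of $1$ with respect to $\B^{(i)}$: running the greedy algorithm with $r_0=1$ produces a first digit $\floor{\beta_i}$ and a first remainder $r_1=\beta_i-\floor{\beta_i}\in[0,1)$, after which the algorithm proceeds exactly as the greedy algorithm for the shifted base $\B^{(i+1)}$ applied to $r_1$. Hence the tail of this expansion of $1$ coincides with $\DBi{i+1}(r_1)$.

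The key step is algebraic. Since $\beta_i\in\Q(\pr)$ by hypothesis and $\floor{\beta_i}\in\Z\subseteq\Q(\pr)$, the remainder $r_1$ lies in $\Q(\pr)\cap[0,1)$. The base $\B^{(i+1)}$ is merely a cyclic permutation of the bases of $\B$, so its base product is again $\pr$ (a Pisot number) and its bases again all lie in $\Q(\pr)$. Therefore part \ref{OurSchmidt:2} of Theorem~\ref{Thm : OurSchmidt}, applied to $\B^{(i+1)}$, yields $\Per(\B^{(i+1)})=\Q(\pr)\cap[0,1)$, and in particular $r_1\in\Per(\B^{(i+1)})$. Thus $\DBi{i+1}(r_1)$ is eventually periodic, and so is the greedy expansion of $1$ with respect to $\B^{(i)}$, which is this tail prefixed by the single digit $\floor{\beta_i}$.

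Finally I would transfer eventual periodicity from the greedy expansion of $1$ to the quasi-greedy expansion $\qDBi{i}(1)$. These two expansions are linked in a standard way: if the greedy expansion of $1$ is infinite then it equals $\qDBi{i}(1)$, while if it is finite (i.e.\ ends in $0^\omega$) then $\qDBi{i}(1)$ is obtained from it by a finite modification followed by cyclic repetition, hence is eventually periodic as well. In either case, eventual periodicity of the greedy expansion of $1$ forces eventual periodicity of $\qDBi{i}(1)$. Since $i\in\Int$ was arbitrary, all of the $\qDBi{i}(1)$ are eventually periodic, which is precisely the statement that $\B$ is a Parry alternate base.

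The hard part will be the last step, namely pinning down the precise relationship between the greedy and quasi-greedy expansions of $1$ in the alternate-base setting: the conversion of a finite greedy expansion into the (eventually periodic) quasi-greedy one is more delicate than in the single-base case, since it must respect the period-$p$ cycling of the bases $\beta_0,\ldots,\beta_{p-1}$. I would either invoke the corresponding description of $\qDBi{i}(1)=\lim_{x\to1^-}\DBi{i}(x)$ from~\cite{CharlierCisternino2021} or establish, as a short lemma, the single implication I actually need, namely that eventual periodicity of the greedy expansion of $1$ entails eventual periodicity of $\qDBi{i}(1)$.
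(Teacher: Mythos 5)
Your proposal is correct and follows essentially the same route as the paper's own proof: both reduce the periodicity of $\qDBi{i}(1)$ to that of the greedy expansion $\DBi{i}(1)$, use the identity $\DBi{i}(1)=\floor{\beta_i}\,\DBi{i+1}(\beta_i-\floor{\beta_i})$, and apply item~\eqref{OurSchmidt:2} of Theorem~\ref{Thm : OurSchmidt} to the point $\beta_i-\floor{\beta_i}\in\Q(\pr)\cap[0,1)$ for the shifted base $\B^{(i+1)}$, whose product of bases is again $\pr$. The greedy-to-quasi-greedy transfer that you rightly flag as delicate is exactly what the paper dispatches with the single word ``Equivalently'' (relying on the known description from~\cite{CharlierCisternino2021}); the only caveat to your sketch is that in the finite case the recursion for $\qDBi{i}(1)$ passes through other shifts $\B^{(j)}$, so one needs ultimate periodicity of $\DBi{j}(1)$ for all $j\in\Int$ simultaneously --- which you do have, since your argument is carried out for arbitrary $i$.
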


We also prove the following theorem generalizing~\cite[Theorem 2.5]{Schmidt1980}. This result is a refinement of the item~\eqref{OurSchmidt:1} of Theorem~\ref{Thm : OurSchmidt}.

\begin{theorem}
\label{Thm:stronger}
Let $\B=(\beta_0,\ldots,\beta_{p-1})$ be an alternate base such that $\beta_0,\ldots,\beta_{p-1}\in\Q(\pr)$ and set $\pr=\prod_{i=0}^{p-1} \beta_i$. If $\beta$ is an algebraic integer that is neither a Pisot number nor a Salem number then $\Per(\B)\cap\Q$ is nowhere dense in $[0,1)$.
\end{theorem}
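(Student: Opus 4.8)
The plan is to exploit the single consequence of the hypothesis that matters, namely that $\pr$, being an algebraic integer that is neither a Pisot nor a Salem number, admits a Galois conjugate $\gamma\neq\pr$ with $|\gamma|>1$. I would fix a field embedding $\sigma\colon\Q(\pr)\to\mathbb{C}$ with $\sigma(\pr)=\gamma$ and set $\gamma_i=\sigma(\beta_i)$, so that $\prod_{i=0}^{p-1}\gamma_i=\gamma$. Since every digit of a greedy $\B$-expansion lies in $\{0,1,\ldots,\ceil{\beta_i}-1\}$, all digits occurring in any $\DB(x)$ are bounded by the single constant $m=\max_i(\ceil{\beta_i}-1)$, independently of $x$; this uniform bound, combined with $|\gamma|>1$, is what forces the conjugated series below to converge.

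The first key step is an algebraic lemma: every $x\in\Per(\B)\cap\Q$ satisfies the \emph{conjugate identity}
\[
    x \;=\; \sum_{m=0}^\infty \sum_{i=0}^{p-1}\frac{\varepsilon_{mp+i}}{\gamma^m\,\gamma_0\cdots\gamma_i},
\]
where $(\varepsilon_n)_n=\DB(x)$. Indeed, choosing the period and preperiod to be multiples of $p$ and summing the geometric tail expresses $x$ as an explicit element of $\Q(\pr)$; applying $\sigma$ replaces each $\beta_i$ by $\gamma_i$ and $\pr$ by $\gamma$, and the resulting series converges since its tail is geometric with ratio a positive power of $\gamma^{-1}$, of modulus $<1$. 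As $x\in\Q$ is fixed by $\sigma$, the conjugated series again evaluates to $x$. Thus every rational with an ultimately periodic $\B$-expansion is represented, \emph{with the very same bounded digit string}, both in the base $\B$ and in the conjugate base $(\gamma_0,\dots,\gamma_{p-1})$.

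I would then convert this into a statement about the alternate base spectrum. Subtracting the representations of two distinct points $x,x'\in\Per(\B)\cap\Q$ over a common period $T$ yields a nonzero rational $\eta=x-x'$ carrying a representation with digits in $\{-m,\dots,m\}$ simultaneously in base $\B$ and in the conjugate base; after clearing the denominators $\pr^{T}-1$ and $\gamma^{T}-1$ these become a matched pair lying in the $\B$-spectrum and in its $\sigma$-image. Hence, if $\Per(\B)\cap\Q$ were dense in some subinterval of $[0,1)$, one could manufacture nonzero spectrum elements of arbitrarily small modulus that are simultaneously controlled in the embedding $\sigma$. The decisive input is Theorem~\ref{Thm : GeneralizationPrague}: under the standing hypothesis $\beta_0,\dots,\beta_{p-1}\in\Q(\pr)$ with $\pr$ an algebraic integer, the gap structure of the $\B$-spectrum is governed by the conjugates of $\pr$, and the presence of the conjugate $\gamma$ with $|\gamma|>1$ is incompatible with such an accumulation. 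This places $\Per(\B)\cap\Q$ inside a closed set with empty interior; equivalently, I would show that every open subinterval of $[0,1)$ contains a further subinterval meeting no point of $\Per(\B)\cap\Q$, which is the definition of nowhere density.

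The main obstacle is exactly this last step: upgrading the soft conjugate identity into a quantitative, Diophantine conclusion. Crude magnitude estimates are useless, because $|\gamma|>1$ makes the conjugated series automatically convergent and bounded, so the constraint on a rational $x$ comes not from the size of $\sigma(x)=x$ but from the fine arithmetic of which bounded-digit strings can represent the same rational in the two bases at once — precisely the separation data packaged by Theorem~\ref{Thm : GeneralizationPrague}. Two technical points must also be handled: when $\gamma$ is non-real one passes to real and imaginary parts (pairing $\gamma$ with $\bar\gamma$) to keep the representations real, and one must use that $\pr$ is genuinely not Salem, not merely not Pisot, to secure a conjugate strictly outside the closed unit disk rather than on its boundary.
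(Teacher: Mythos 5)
Your opening moves match the paper: choose a Galois conjugate $\gamma\ne\pr$ with $|\gamma|>1$ (this is where non-Pisot \emph{and} non-Salem is used, as you note), and establish the conjugate identity for every $x\in\Per(\B)\cap\Q$ by applying the field embedding to the closed periodic form of the series — this is exactly Proposition~\ref{Pro : Equivalences1} combined with Lemma~\ref{Lem:psi-periodique}, and since the paper estimates moduli directly, your worry about splitting into real and imaginary parts for non-real $\gamma$ is unnecessary. The genuine gap is the final step, which you have partly diagnosed yourself but then patched with a mechanism that does not exist. Theorem~\ref{Thm : GeneralizationPrague} contains no ``separation data'' or ``gap structure'' for the spectrum: it is a pure algebraicity statement (ultimately periodic representations of $1/q_i$ force $\pr$ to be an algebraic integer and $\beta_0,\ldots,\beta_{p-1}\in\Q(\pr)$), and in Theorem~\ref{Thm:stronger} its conclusions are already hypotheses, so it has nothing left to give. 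The only spectral input anywhere in the paper, \cite[Proposition 24]{CharlierCisterninoMasakovaPelantova2023}, runs in the wrong direction for you: it says that accumulation of the spectrum forces $\pr$ non-Pisot (or a field failure), which is how the paper handles the \emph{Pisot} case in Theorem~\ref{Thm : OurSchmidt}\eqref{OurSchmidt:2}. Here $\pr$ is not Pisot, so the spectrum $X^{\Delta}(\pr)$ may perfectly well have accumulation points; manufacturing ``nonzero spectrum elements of arbitrarily small modulus'' is not a contradiction but the expected behaviour. Your construction of such elements is also quantitatively broken: clearing the denominators from $x-x'$ means multiplying by $\pr^{s}(\pr^{T}-1)$ with $s,T$ unbounded as the points get closer, which destroys smallness, so no small spectrum element is actually produced.

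What the paper does instead is a different, elementary quantitative device that your proposal lacks. For each $x$ it tracks the set $E(x)$ of indices $n$ at which the two head sums $\sum_{m=0}^{n-1}\eta_m(x)/\pr^{m+1}$ and $\sum_{m=0}^{n-1}\psi(\eta_m(x))/\gamma^{m+1}$ differ; a short perturbation argument (truncate the expansion, and if the head discrepancy vanishes, add $\pr^{-d}$ for some $d$ with $\pr^{d}\ne\gamma^{d}$) shows that the set of truncated heads $g$ carrying a \emph{fixed nonzero} discrepancy is dense in $[0,1)$. Then each such $g$ is shown to have a right-neighbourhood $\left[g,g+\pr^{-N}\right)$ disjoint from $\Per(\B)\cap\Q$: any periodic rational $y$ in it has greedy grouped digits equal to those of $g$ followed by zeros up to index $N-1$, satisfies the full conjugate identity by your own first step, and hence forces the head discrepancy to be at most $C/(\pr^{N}(\pr-1))+D/(|\gamma|^{N}(|\gamma|-1))$, which tends to $0$ — contradicting that the discrepancy at $g$ is a fixed nonzero number. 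This is precisely the ``fine arithmetic'' constraint you correctly sensed was needed but did not supply: the obstruction is extracted from a \emph{single} truncation point with nonzero embedding-discrepancy, not from pairs of periodic points or from spectral gaps. Without this (or an equivalent) idea, your outline does not close.
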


The paper has the following organization.
Section~\ref{Sec:prepa} contains the preparatory work in order to prove Theorems~\ref{Thm : OurSchmidt} and~\ref{Thm:stronger}. We start by proving two equivalent conditions on $x\in[0,1)$ in order to belong to $\Per(\B)$, allowing us to use the remainders of the greedy algorithms only once out of $p$ steps. Then we obtain a generalization of a result from~\cite{CharlierCisterninoMasakovaPelantova2023}, which is Theorem~\ref{Thm : GeneralizationPrague}. This generalization represents a crucial argument in the proof of our main result. In Section~\ref{Sec:OurSchmidt}, we prove Theorem~\ref{Thm : OurSchmidt} and Corollary~\ref{Cor : PisotimpliqueParry}.
Section~\ref{Sec:stronger} is dedicated to the proof of Theorem~\ref{Thm:stronger}. We then consider the restricted case where $p=1$, i.e., of usual real bases, in Section~\ref{Sec:p=1}. 
Several arguments used for the general case of alternate bases just vanish in this restricted setting, giving rise to a new proof of Schmidt's result that is simpler than the original one  from~\cite{Schmidt1980}. Finally, in Section~\ref{Sec:future}, we present some perspectives for future research.

\section{Preparatory results}
\label{Sec:prepa}

\subsection{On $\B$-periodicity and $\pr$-periodicity $p$ by $p$ steps}

From now on, we fix a positive integer $p$ and an alternate base $\B=(\beta_0,\ldots,\beta_{p-1})$, and we set $\pr=\prod_{i=0}^{p-1}\beta_i$. We define
\[
	f_{\B}\colon 
	\prod_{i=0}^{p-1}\ [\![0,\ceil{\beta_i}-1]\!]\to\R,\ 
	(a_0,\ldots, a_{p-1}) 
	\mapsto \sum_{i=0}^{p-1}a_i \beta_{i+1}\cdots\beta_{p-1}.
\]
and we consider the image of the map $f_{\B}$ as a set of real digits which we denote by $\Dig(\B)$, that is,
\begin{equation}
\label{Eq:Dig}
	\Dig(\B)
	=\left\{\sum_{i=0}^{p-1}a_i \beta_{i+1}\cdots\beta_{p-1} :
	\forall i\in \Int,\ a_i\in [\![0,\ceil{\beta_i}-1]\!]\right\}.
\end{equation}
These objects were already considered in~\cite{CharlierCisterninoDajani2023}. For $x\in[0,1)$ and $m\in \N$, we set 
\begin{equation}
\label{Eq : eta}
	\eta_m=f_{\B}(\varepsilon_{mp},\varepsilon_{mp+1},\ldots,\varepsilon_{mp+p-1})
\end{equation}
where $(\varepsilon_n)_{n\in\N}=\DB(x)$. Thus, we have $\eta_m\in\Dig(\B)$ and
\begin{equation}
\label{Eq : x sum eta}
	x=\sum_{m=0}^{\infty}\frac{\eta_m}{\pr^{m+1}}.
\end{equation}
We will sometimes write $\eta_m(x)$ instead of just $\eta_m$ when the dependence in $x$ needs to be emphasized. Moreover, for all $m\in\N$, we have 
\begin{equation}
\label{Eq : rn+1 rn}
	r_{(m+1)p}=\pr r_{mp} - \eta_m
\end{equation}
where $r_n$ is the $n$-th remainder from the greedy algorithm computing $\DB(x)$.

\begin{proposition}
\label{Pro : Equivalences1}
Let $x\in [0,1)$. The following assertions are equivalent.
\begin{enumerate}
\item The sequence $(\varepsilon_n)_{n\in\N}$ is ultimately periodic, i.e., $x\in \Per(\B)$.
\item The sequence $(\eta_m)_{m\in\N}$ is ultimately periodic.
\item There exist $m,m'\in\N$ such that $r_{mp}=r_{m'p}$, i.e., the sequence $(r_{mp})_{m\in\N}$ is not injective.
\end{enumerate}
\end{proposition}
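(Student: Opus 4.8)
The plan is to establish the cyclic chain of implications $(1)\Rightarrow(2)\Rightarrow(3)\Rightarrow(1)$. Routing the argument this way deliberately avoids the delicate question of whether the map $f_{\B}$ is injective: in general the digit blocks cannot be recovered from the values $\eta_m$, so a direct passage from $(2)$ to $(1)$ is not available, whereas passing through $(3)$ is clean. The only inputs I will need are the recurrence \eqref{Eq : rn+1 rn} relating consecutive remainders $r_{mp}$ to the $\eta_m$, the value formula \eqref{Eq : x sum eta}, and the determinism of the greedy algorithm.

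For $(1)\Rightarrow(2)$, suppose $(\varepsilon_n)_{n\in\N}$ is ultimately periodic, say $\varepsilon_{n+q}=\varepsilon_n$ for all $n\ge N$. The subtlety is that the period $q$ need not be a multiple of $p$; I would fix this by setting $Q=\operatorname{lcm}(p,q)$ and writing $Q=kp$, so that the blocks $B_m=(\varepsilon_{mp},\ldots,\varepsilon_{mp+p-1})$ satisfy $B_{m+k}=B_m$ as soon as $mp\ge N$. Since $\eta_m=f_{\B}(B_m)$ by \eqref{Eq : eta}, the sequence $(\eta_m)_{m\in\N}$ is then ultimately periodic with period $k$.

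For $(2)\Rightarrow(3)$, which I regard as the step carrying the real content, I would first iterate \eqref{Eq : rn+1 rn} and combine it with \eqref{Eq : x sum eta} to obtain the closed form $r_{mp}=\sum_{l=0}^{\infty}\eta_{m+l}/\pr^{l+1}$; equivalently, since $mp\equiv 0\pmod p$ the greedy algorithm run from the remainder $r_{mp}$ is exactly the $\B$-expansion of $r_{mp}$, whose $\eta$-blocks are $(\eta_{m+l})_{l\in\N}$. Thus $r_{mp}$ depends only on the tail $(\eta_{m+l})_{l\in\N}$. If $(\eta_m)_{m\in\N}$ is ultimately periodic, then there are only finitely many such tails, hence only finitely many distinct values among the $r_{mp}$; as there are infinitely many indices $m$, the sequence $(r_{mp})_{m\in\N}$ cannot be injective.

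Finally, for $(3)\Rightarrow(1)$, suppose $r_{mp}=r_{m'p}$ with $m<m'$. Here I would exploit the crucial phase alignment: both indices $mp$ and $m'p$ are $\equiv 0\pmod p$, so the base used at step $mp+j$ equals the base used at step $m'p+j$ for every $j\ge 0$. Since $\varepsilon_n=\floor{\beta_n r_n}$ and $r_{n+1}=\beta_n r_n-\varepsilon_n$ define the greedy algorithm deterministically from the current remainder together with the residue $n\bmod p$, an immediate induction on $j$ yields $r_{mp+j}=r_{m'p+j}$ and $\varepsilon_{mp+j}=\varepsilon_{m'p+j}$ for all $j\ge 0$. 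Hence $\varepsilon_{n+(m'-m)p}=\varepsilon_n$ for all $n\ge mp$, so $(\varepsilon_n)_{n\in\N}$ is ultimately periodic with period $(m'-m)p$. I expect the main obstacle throughout to be not any individual computation but the bookkeeping of residues modulo $p$: it is precisely because we compare remainders only at indices that are multiples of $p$ that the base phases coincide and both the closed form in $(2)\Rightarrow(3)$ and the determinism argument in $(3)\Rightarrow(1)$ close up correctly.
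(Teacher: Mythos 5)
Your proposal is correct and follows essentially the same route as the paper: the same cycle $(1)\Rightarrow(2)\Rightarrow(3)\Rightarrow(1)$, with the closed form $r_{mp}=\pr^m\sum_{\ell=m}^{\infty}\eta_\ell/\pr^{\ell+1}$ (the paper's equation~\eqref{Eq : Prop5}) driving $(2)\Rightarrow(3)$ and greedy determinism with phase alignment modulo $p$ giving $(3)\Rightarrow(1)$. The only cosmetic differences are that you invoke a pigeonhole argument on the finitely many tails where the paper exhibits the explicit repetition $r_{sp}=r_{(s+t)p}$, and you are slightly more careful than the paper about the period of $(\varepsilon_n)$ not being a multiple of $p$ in $(1)\Rightarrow(2)$.
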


\begin{proof}
If the sequence $(\varepsilon_n)_{n\in\N}$ is ultimately periodic with period $t$, then so is the sequence $(\eta_m)_{m\in\N}$. Hence the implication $(1) \implies (2)$ is verified. The implication $(3) \implies (1)$ follows from the greedy algorithm. Indeed, if there exist $m,m'\in\N$ such that $r_{mp}=r_{m'p}$, then $\varepsilon_{mp+n}=\varepsilon_{m'p+n}$ for all $n\in\N$. It remains to prove $(2)\implies (3)$. By~\eqref{Eq : x sum eta} and~\eqref{Eq : rn+1 rn}, for all $m\in \N$, we have 
\begin{align}
\label{Eq : Prop5}
	r_{mp}
	&=\pr^m\left(x-\sum_{\ell=0}^{m-1}\frac{\eta_\ell}{\pr^{\ell+1}}\right)\\
\nonumber
	&=\pr^m\left(\sum_{\ell=m}^\infty\frac{\eta_\ell}{\pr^{\ell+1}}\right).
\end{align}
Suppose that the sequence $(\eta_m)_{m\in \N}$ is ultimately periodic, i.e., that there exists $s\in \N$ and $t\in \N_{\ge1}$ such that $\eta_\ell=\eta_{\ell+t}$ for all $\ell\ge s$. Then 
\[
	r_{sp}
	=\pr^s\left(\sum_{\ell=s}^\infty\frac{\eta_{\ell+t}}{\pr^{\ell+1}}\right)
	=\pr^{s+t}\left(\sum_{\ell=s+t}^\infty\frac{\eta_\ell}{\pr^{\ell+1}}\right)
	=r_{(s+t)p}.
\]
\end{proof}

\begin{remark}
\label{rem:thue-morse}
We note that grouping terms $p$ by $p$ in an arbitrary $\B$-representation (not necessarily the greedy one), we do not get such equivalences. More precisely, it might be that, when dealing with other $\B$-representations than the greedy one, the considered representation is not ultimately periodic whereas the corresponding sequence of digits obtain by grouping $p$ by $p$ as in~\eqref{Eq : eta} is ultimately periodic. For example, for the alternate base $\B=(\varphi,\varphi,\varphi)$ where $\varphi$ is the Golden ratio $\frac{1+\sqrt{5}}{2}$, consider the infinite word $f(t)$ where $t$ is the famous Thue-Morse infinite word $01101001\cdots$ and $f\colon \{0,1\}^*\to \{0,1\}^*$ is the injective coding defined by $f(0)=100$ and $f(1)=011$. This word $f(t)$ is a $\B$-representation of $\frac{\varphi}{2}$. Since the Thue-Morse word is not ultimately periodic, the word $f(t)$ is not ultimately periodic either. Since $f_{\B}(0,1,1)=f_{\B}(1,0,0)$, by grouping the terms $3$ by $3$, we get the purely periodic infinite word $(\varphi^2)^\omega$ over the alphabet $\Dig(\B)=\{0,1,\varphi,\varphi+1,\varphi^2,\varphi^2+1,\varphi^2+\varphi,\varphi^2+\varphi+1\}=\{0,1,\varphi,\varphi^2,\varphi^2+1,\varphi^3,2\varphi^2\}$.
\end{remark}

\subsection{$\B$-expansions of rational numbers}

For our purposes, we need to prove the following theorem, which is a generalization of~\cite[Theorems 14 and 19]{CharlierCisterninoMasakovaPelantova2023}. Compared with the statements therein, we now authorize alternate representations of numbers of the form $\frac{1}{q}$ for $q$ a non-zero integer instead of alternate representations of $1$ only, and the condition that the $p$ sequences all start with a positive digit is now relaxed.

\begin{theorem}
\label{Thm : GeneralizationPrague}
If for every $i\in \Int$, there exists a non-zero integer $q_i$ and an ultimately periodic sequence $a_i=(a_{i,n})_{n\in\N}$ of integers such that 
\begin{equation}
\label{Eq : val}
	\sum_{n=0}^\infty\frac{a_{i,n}}{\prod_{k=0}^n\beta_{i+k}}
	=\frac{1}{q_i},
\end{equation}
then $\pr$ is an algebraic integer. If moreover these $p$ sequences have non-negative elements and for all $i\in \Int$, there exists $m_i\in \N$ such that $a_{i,m_ip}\ge 1$, then $\beta_0,\ldots,\beta_{p-1} \in \Q(\pr)$.
\end{theorem}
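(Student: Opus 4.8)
The plan is to transfer the hypothesis from the $p$ individual shifts to the single ``product base'' $\pr$, and then to run an integral-dependence argument for the first conclusion and a periodicity-elimination argument for the second. First I would group the digits $p$ by $p$, exactly as in~\eqref{Eq : eta} and~\eqref{Eq : x sum eta}. Since $\prod_{k=0}^{mp+j}\beta_{i+k}=\pr^m\,\beta_i\cdots\beta_{i+j}$, each hypothesis~\eqref{Eq : val} rewrites as
\[
\frac{1}{q_i}=\sum_{m=0}^\infty\frac{\eta_{i,m}}{\pr^{m+1}},\qquad \eta_{i,m}=\sum_{j=0}^{p-1}a_{i,mp+j}\,\beta_{i+j+1}\cdots\beta_{i+p-1},
\]
so that $\eta_{i,m}$ lies in the finitely generated $\Z$-module spanned by the partial products $\sigma_{i,j}=\beta_{i+j+1}\cdots\beta_{i+p-1}$ ($0\le j\le p-1$, with $\sigma_{i,p-1}=1$) underlying $\Dig(\B^{(i)})$. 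Setting $z_{i,m}=\pr^m\big(\tfrac1{q_i}-\sum_{\ell<m}\eta_{i,\ell}\pr^{-(\ell+1)}\big)$ as in~\eqref{Eq : Prop5}, I obtain the recurrence $\pr\,z_{i,m}=z_{i,m+1}+\eta_{i,m}$ with $z_{i,0}=1/q_i$, and by the argument of Proposition~\ref{Pro : Equivalences1} the ultimate periodicity of each $a_i$ forces the sequences $(\eta_{i,m})_m$ and $(z_{i,m})_m$ to take only finitely many values. Two normalizing identities will be used repeatedly: $q_i z_{i,0}=1$ and, taking $m=0$ above, $\pr=q_i(z_{i,1}+\eta_{i,0})$.

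For the algebraic integrality of $\pr$, the plan is to exhibit a nonzero finitely generated $\Z$-module $M\subseteq\R$ with $\pr M\subseteq M$; the determinant trick (Cayley--Hamilton) then makes $\pr$ integral over $\Z$, i.e. an algebraic integer. I would take $M$ generated by the finite set consisting of $1$, the remainders $z_{i,m}$, the partial products $\sigma_{i,j}$, and the products $\sigma_{i,j}z_{i',m'}$ (all finite by the previous paragraph). Closure of the ``remainder part'' is immediate from $\pr z_{i,m}=z_{i,m+1}+\eta_{i,m}$, since $z_{i,m+1}\in M$ and $\eta_{i,m}$ is a $\Z$-combination of the $\sigma_{i,j}\in M$. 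The delicate point, and the main obstacle, is the closure of the ``digit part'', i.e. showing $\pr\,\sigma_{i,j}\in M$: here the $\sigma_{i,j}$ are genuinely irrational, so one cannot argue as when $p=1$, where the digits are plain integers and $M=\langle 1,z_{0,m}\rangle_\Z$ already works, which is precisely why the whole difficulty disappears in Schmidt's setting. I expect to resolve this by feeding the reduction $\pr=q_{i'}(z_{i',1}+\eta_{i',0})$ back in, choosing the shift $i'$ so that the arcs of consecutive bases recombine over a full cycle; the cyclic interplay of all $p$ shifts together with the normalizations $q_{i'}z_{i',0}=1$ is what keeps degrees bounded and lands $\pr\,\sigma_{i,j}$ back in $M$. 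Verifying this closure carefully is the technical heart of the first conclusion.

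For the second conclusion, I would solve the recurrence explicitly as $z_{i,m}=\pr^m/q_i-\sum_{j=0}^{p-1}\sigma_{i,j}\,P_{i,j,m}(\pr)$, where $P_{i,j,m}(\pr)=\sum_{\ell=0}^{m-1}a_{i,\ell p+j}\,\pr^{m-1-\ell}\in\Z[\pr]$ collects the phase-$j$ digits. If $(z_{i,m})_m$ is periodic with period $t$ from some index $s$ on, then equating $z_{i,m}=z_{i,m+t}$ for $m\ge s$ yields, after clearing $\pr$-denominators, a system of $\Q(\pr)$-linear relations among $\sigma_{i,0},\ldots,\sigma_{i,p-2}$ (recall $\sigma_{i,p-1}=1$) with coefficients in $\Z[\pr]$. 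This is where the extra hypotheses enter: the non-negativity of the $a_{i,n}$ prevents any cancellation, and the assumption $a_{i,m_ip}\ge 1$ guarantees that the coefficient attached to the phase-$0$ product $\sigma_{i,0}$ is a nonzero element of $\Q(\pr)$. Combining these with the analogous relations for the $p$ shifts, whose partial products are interlinked by $\sigma_{i,0}=\pr/\beta_i$ and $\beta_{i+j+1}=\sigma_{i,j}/\sigma_{i,j+1}$, I expect the resulting system to be non-degenerate and solvable over $\Q(\pr)$. Hence every $\sigma_{i,j}\in\Q(\pr)$, and in particular $\beta_i=\pr/\sigma_{i,0}\in\Q(\pr)$ for each $i\in\Int$, as required. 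In both parts the governing difficulty is the same, namely the presence of the irrational digits $\eta_{i,m}$: controlling them for the module closure in the first part, and guaranteeing the non-vanishing of the leading coefficients for the elimination in the second.
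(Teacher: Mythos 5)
Both halves of your proposal stop exactly where the real work begins, and in both cases the deferred step is a genuine gap, not a routine verification. For the integrality of $\pr$, the module closure $\pr\,\sigma_{i,j}\in M$ that you call the technical heart does not go through along the lines you sketch: substituting $\pr=q_{i'}(z_{i',1}+\eta_{i',0})$ into $\pr\,\sigma_{i,j}$ produces the terms $q_{i'}\eta_{i',0}\sigma_{i,j}$, which are $\Z$-combinations of products $\sigma_{i',j'}\sigma_{i,j}$ of \emph{two} arcs of the cycle $\beta_0,\ldots,\beta_{p-1}$; these are not in your generating set, adjoining them creates products of three arcs at the next multiplication, and so on. The only multiplicative relation available, $\pr=\beta_0\cdots\beta_{p-1}$, reduces exponent vectors of the $\beta_k$ along the diagonal direction $(1,\ldots,1)$ only, so the excess multiplicities accumulate and no finitely generated $\Z$-module closed under multiplication by $\pr$ emerges from this construction. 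Indeed each hypothesis~\eqref{Eq : val} yields just one scalar, $\Z[\pr]$-linear relation among the arcs $\sigma_{i,0},\ldots,\sigma_{i,p-1}$ (this is~\eqref{Eq : Lemme17}), which is not the kind of relation that can fold arc-products back into $M$. The paper's proof avoids this entirely: it assembles the $p$ relations~\eqref{Eq : Lemme17} into the matrix equation~\eqref{Eq : SystemForAlgebraic} and obtains integrality by \emph{elimination}, observing that $\pr$ is a root of $\det(M-X^m(X^k-1)I_p)$, a polynomial with integer coefficients and leading coefficient $(-1)^p$; the monicity comes from degree bookkeeping in the determinant, not from a Cayley--Hamilton module argument.

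Your second half has the same problem in linear-algebra form. After clearing denominators (each $\sigma_{i,j}$ is, up to a factor $\pr$, a ratio of the arcs $u_j=\sigma_{0,j}$), your system of relations is exactly the eigenvector equation $M(\pr)u=\pr^m(\pr^k-1)u$ with $u=(\beta_1\cdots\beta_{p-1},\,\beta_2\cdots\beta_{p-1},\ldots,\beta_{p-1},1)^T$. To conclude $u_j\in\Q(\pr)$ you need the kernel of $M(\pr)-\pr^m(\pr^k-1)I_p$ to be one-dimensional, and the nonvanishing of individual coefficients --- which you correctly extract from $a_{i,m_ip}\ge 1$; it gives $g_{i,0}(\pr)>0$ --- does not imply this: a priori the eigenvalue could have geometric multiplicity greater than one, and then solving over $\Q(\pr)$ pins down nothing. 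This is precisely where the paper invokes the Perron--Frobenius theorem: non-negativity of the digits makes all entries of $M(\pr)$ non-negative, the positive entries at positions $(0,1),(1,2),\ldots,(p-2,p-1),(p-1,0)$ make $M(\pr)$ irreducible, the existence of the known \emph{positive} eigenvector $u$ forces $\pr^m(\pr^k-1)$ to be the Perron eigenvalue, whose eigenspace is a line; since the matrix has entries in $\Q(\pr)$, a kernel basis vector can be chosen in $\Q(\pr)^p$, and normalizing by the last coordinate (which is $1$) gives $u_j\in\Q(\pr)$ and hence $\beta_i\in\Q(\pr)$. So the two steps you leave as expectations are exactly the two substantive ingredients of the actual proof --- determinant elimination with control of the leading coefficient, and Perron--Frobenius via irreducibility --- and neither follows from the material you assembled.
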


This result can be proven by following the same lines as in the proof of~\cite[Theorem 14]{CharlierCisterninoMasakovaPelantova2023}. We only give a sketch of the proof underlying the main differences. 

\begin{proof}[Sketch of the proof of Theorem~\ref{Thm : GeneralizationPrague}]

We start with the first part of the statement. For every $i\in \Int$, let $q_i$ be a non-zero integer and let $a_i=(a_{i,n})_{n\in\N}$ be an ultimately periodic sequence of integers such that~\eqref{Eq : val} holds. Without loss of generality, we suppose that the $p$ sequences $a_i$ all have preperiod $mp$ and period $kp$ with $m\in \N$ and $k\in \N_{\ge 1}$. Then, for all $i,j\in\Int$, we define $g_{i,j}$ as the following polynomial in the indeterminate $X$:
\[
	g_{i,j}
	=q_i(X^k-1)\left(\sum_{n=0}^{m-1} a_{i,j+np}X^{m-1-n}\right) 
	+ q_i\sum_{n=0}^{k-1} a_{i,j+(m+n)p}X^{k-1-n}.
\] 
(We note that whenever $q_i=1$, the polynomials $g_{i,0},\ldots,g_{i,p-1}$ coincide with those from the proof of~\cite[Theorem 14]{CharlierCisterninoMasakovaPelantova2023}.) As in~\cite[Lemma 17]{CharlierCisterninoMasakovaPelantova2023}, we have
\begin{equation}
\label{Eq : Lemme17}
  	\pr^m(\pr^k-1) 
  	= \sum_{j=0}^{p-1}g_{i,j}(\pr)\beta_{i+j+1}\cdots\beta_{i+p-1}.
\end{equation}
Then we consider the associated polynomial system
\[
	\begin{cases}
	X^m(X^k-1)
	= \displaystyle{\sum_{j=0}^{p-1}}g_{i,j}X_{i+j+1}\cdots X_{i+p-1}, & \text{for } i\in\Int \\
	X=X_0\ldots X_{p-1}
	\end{cases}
\] 
in the $p+1$ indeterminates $X_0,\ldots,X_{p-1},X$, with the convention that $X_{n+p}=X_n$ for all integers $n$. As in the proof of~\cite[Theorem 14]{CharlierCisterninoMasakovaPelantova2023}, by rearranging the equations and by substituting $X_0\cdots X_{p-1}$ by $X$, the first $p$ equations of the system can be written in the matrix form
\begin{equation}
\label{Eq : SystemForAlgebraic}
(M-X^m(X^k-1)I_p) \begin{pmatrix}
	X_1X_2\cdots X_{p-1}\\ 
	X_2\cdots X_{p-1}\\ 
	\vdots\\
	X_{p-1}\\
	1
	\end{pmatrix}
	=
	\begin{pmatrix}
	0\\ 
	\vdots\\
	0
	\end{pmatrix}
\end{equation}
where 
\[
	M=
	\begin{pmatrix}
	g_{1,p-1} & Xg_{1,0}	  & \cdots &	Xg_{1,p-3} & Xg_{1,p-2} \\
	g_{2,p-2} & g_{2,p-1} & \cdots & Xg_{2,p-4} & Xg_{2,p-3} \\
	\vdots   & \vdots 	& \ddots & \vdots & \vdots \\
	g_{p-1,1} 	& g_{p-1,2}   	& \cdots & g_{p-1,p-1}   & Xg_{p-1,0} \\
	g_{0,0}   	& g_{0,1}     	& \cdots & g_{0,p-2}     & g_{0,p-1}
	\end{pmatrix}
\]
and $I_p$ is the identity matrix of size $p$. The fact that the product $\pr$ is an algebraic integer is obtained by using the same argument as in the proof of~\cite[Theorem 14]{CharlierCisterninoMasakovaPelantova2023}, namely, that $\beta$ is a root of the polynomial in $X$ obtained by computing the determinant of the matrix $M-X^m(X^k-1)I_p$, which has leading coefficient $(-1)^p$. 

Now we turn to the second part of the statement. Thus, from now on, we suppose that the sequences $a_i$ have non-negative digits and for all $i\in \Int$, let $m_i\in \N$ be such that $a_{i,m_ip}\ge 1$. By~\cite[Lemma 16]{CharlierCisterninoMasakovaPelantova2023}, we know that 
\[
	g_{i,j}(\pr)
 	=q_i\pr^m(\pr^k-1)
 	\sum_{n=0}^{\infty}\frac{a_{i,j+np}}{\pr^{n+1}}
\]
for all $i,j\in\Int$. We get that the matrix $M(\pr)$ (that is, the matrix $M$ where the indeterminate $X$ is substituted by $\pr$ in every entry) has non-negative entries. Moreover, for all $i\in\Int$, we have $g_{i,0}(\pr)>0$ since $a_{i,m_ip}\ge 1$. Thus the entries $\pr g_{1,0}(\pr)$, $\pr g_{2,0}(\pr),\ldots,\pr g_{p-1,0}(\pr)$, $g_{0,0}(\pr)$ of $M(\pr)$ in respective positions $(0,1)$, $(1,2),\ldots,(p-2,p-1)$, $(p-1,0)$ are all positive. Therefore, the matrix $M(\pr)$ is irreducible. Then, in the exact same way as in the last part of the proof of~\cite[Theorem 14]{CharlierCisterninoMasakovaPelantova2023}, we obtain that $\beta_0,\ldots,\beta_{p-1}\in \Q(\pr)$ by using the Perron-Frobenius theorem.  
\end{proof}

Note that the previous theorem is immediate while considering Renyi's expansions. In the alternate base framework, Theorem~\ref{Thm : GeneralizationPrague} (and~\cite[Theorems 14 and 19]{CharlierCisterninoMasakovaPelantova2023}) needs much more work.

\subsection{Field isomorphism on infinite sums}

Under the assumption of periodicity of the coefficients of a power series, and provided that $\beta$ and $\gamma$ are Galois conjugates of moduli greater than one, the $\Q$-isomorphism from $\Q(\beta)$ to $\Q(\gamma)$ mapping $\beta$ to $\gamma$ behaves well with respect to this series.

\begin{lemma}
\label{Lem:psi-periodique}
Let $\beta$ be an algebraic number, let $\gamma$ be a Galois conjugate of $\beta$ with $|\beta|,|\gamma|>1$, and let $ \psi$ be the $\Q$-isomorphism from $\Q(\beta)$ to $\Q(\gamma)$ defined by $\psi(\beta)=\gamma$. For all ultimately periodic sequences $(z_m)_{m\in\N}$ over $\Q(\beta)$, we have 
\[
	\psi\left(\sum_{m=0}^\infty\frac{z_m}{\pr^{m+1}}\right)
	=\sum_{m=0}^\infty\frac{\psi(z_m)}{\gamma^{m+1}}.
\]
\end{lemma}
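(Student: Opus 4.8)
The plan is to exploit the ultimate periodicity of $(z_m)_{m\in\N}$ to reduce the infinite sum to a finite algebraic expression, to which the field isomorphism $\psi$ can be applied directly. Suppose the sequence has preperiod $s$ and period $t$, so that $z_{m+t}=z_m$ for all $m\ge s$. First I would split the sum as
\[
	\sum_{m=0}^\infty\frac{z_m}{\pr^{m+1}}
	=\sum_{m=0}^{s-1}\frac{z_m}{\pr^{m+1}}
	+\sum_{m=s}^\infty\frac{z_m}{\pr^{m+1}}.
\]
The first part is a finite sum of elements of $\Q(\beta)$, so $\psi$ acts on it entrywise in the obvious way, sending $\pr$ to $\gamma$ (note $\psi(\pr)=\gamma$ follows from $\psi(\beta)=\gamma$). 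For the tail I would sum the geometric-like series explicitly: the periodicity gives a closed form
\[
	\sum_{m=s}^\infty\frac{z_m}{\pr^{m+1}}
	=\frac{1}{\pr^s}\cdot\frac{1}{1-\pr^{-t}}
	\sum_{j=0}^{t-1}\frac{z_{s+j}}{\pr^{j+1}},
\]
which is a rational expression in $\pr$ with coefficients in $\Q(\beta)$, hence an element of $\Q(\beta)$.

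The key point is that convergence of the original series and of its $\psi$-image are both guaranteed by the hypotheses $|\beta|>1$ and $|\gamma|>1$: since $|\pr|>1$ and $|\gamma|>1$, both geometric tails converge, so the closed forms above are legitimate and the factor $1-\pr^{-t}$ (respectively $1-\gamma^{-t}$) is nonzero. Once everything is written as a rational function of $\pr$ with coefficients in $\Q(\beta)$, I would simply apply $\psi$: being a $\Q$-isomorphism, it commutes with the finite field operations (sums, products, and inverses of the nonzero element $1-\pr^{-t}$), sends each $z_m$ to $\psi(z_m)$ and each occurrence of $\pr$ to $\gamma$. This yields exactly the closed form of $\sum_{m=0}^\infty\psi(z_m)/\gamma^{m+1}$, and reversing the splitting-and-summing computation on the $\gamma$ side recovers the claimed infinite sum.

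The main obstacle to address carefully is not the algebra but the analysis: one must verify that the isomorphism-image of the finite closed form genuinely equals the convergent infinite series on the $\gamma$ side. That is, $\psi$ is only an algebraic map between number fields and a priori has nothing to do with limits of series; the reason it interacts correctly with the infinite sum is precisely that periodicity lets us replace the limit by a finite algebraic object before applying $\psi$, and then re-expand on the other side using $|\gamma|>1$ to guarantee the geometric series $\sum_{m\ge s}\psi(z_m)/\gamma^{m+1}$ converges to the transported closed form. I would therefore state the finite closed forms on both sides as the pivot of the argument and check that the denominators $1-\pr^{-t}$ and $1-\gamma^{-t}=\psi(1-\pr^{-t})$ are nonzero, so that no division by zero occurs under $\psi$.
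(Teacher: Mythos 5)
Your proposal is correct and follows essentially the same route as the paper: split off the preperiod, collapse the periodic tail into the closed form $\bigl(\sum_{m=s}^{s+t-1} z_m/\beta^{m+1}\bigr)\frac{\beta^t}{\beta^t-1}$ (your expression $\frac{1}{\beta^s}\cdot\frac{1}{1-\beta^{-t}}\sum_{j=0}^{t-1} z_{s+j}/\beta^{j+1}$ is the same quantity), apply $\psi$ to this finite algebraic expression, and re-expand on the $\gamma$ side using $|\gamma|>1$. Your explicit remarks on convergence and the nonvanishing of $1-\beta^{-t}$ and $1-\gamma^{-t}$ are exactly the points the paper's computation implicitly relies on.
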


\begin{proof}
Let $(z_m)_{m\in\N}$ be an ultimately periodic sequence over $\Q(\beta)$, say of preperiod $s$ and period $t$. Then 
\begin{align*}
	\psi\left(\sum_{m=0}^\infty\frac{z_m}{\pr^{m+1}}\right)
	&= \psi\left(\sum_{m=0}^{s-1}\frac{z_m}{\pr^{m+1}}		
	+\left(\sum_{m=s}^{s+t-1} \frac{z_m}{\pr^{m+1}}\right)
	\frac{\pr^t}{\pr^t-1}\right)\\
	&= \sum_{m=0}^{s-1}\frac{\psi(z_m)}{\gamma^{m+1}}		
	+\left(\sum_{m=s}^{s+t-1} \frac{\psi(z_m)}{\gamma^{m+1}}\right)
	\frac{\gamma^t}{\gamma^t-1}\\
	&=\sum_{m=0}^\infty\frac{\psi(z_m)}{\gamma^{m+1}}.
\end{align*}
\end{proof}

We note that this lemma will be used for algebraic integers $\beta>1$ only, but we have chosen to state it in its full generality.

\section{Proof of Theorem~\ref{Thm : OurSchmidt}}
\label{Sec:OurSchmidt}

We are now ready to prove our main result.

\begin{proof}[Proof of Theorem~\ref{Thm : OurSchmidt}]
We start with the first item. Suppose that $\Q\cap [0,1)\subseteq \bigcap_{i=0}^{p-1}\Per(\B^{(i)})$. For each $i\in\Int$, we can choose a sufficiently large integer $m_i$ such that $\max\{\frac{\beta_i\beta^{m_i}}{2},\beta^{m_i}\}+1<\beta_i\beta^{m_i}$. This yields that the intervals $(\beta^{m_i},\beta_i\beta^{m_i}]$ and $(\frac{\beta_i\beta^{m_i}}{2},\beta_i\beta^{m_i}]$ have lengths greater than one. Therefore, there exists an integer $q_i$ that belongs to both of them. Thus, for each $i\in\Int$, we have constructed a rational number of the form $\frac{1}{q_i}$ having a $\B^{(i)}$-expansion beginning with the prefix $0^{m_ip}1$. Moreover, by hypothesis, these $\B^{(i)}$-expansions are all ultimately periodic sequences of integers.
Therefore, we may apply Theorem~\ref{Thm : GeneralizationPrague}. We get that $\pr$ is an algebraic integer and that $\beta_0,\ldots,\beta_{p-1}\in \Q(\pr)$. Now, let $\gamma$ be a Galois conjugate of $\pr$ such that $|\gamma|>1$ and let $\psi$ be the $\Q$-isomorphism from $\Q(\pr)$ to $\Q(\gamma)$ defined by $\psi(\pr)=\gamma$. 

First, we claim that for any $x\in \Q\cap [0,1)$, we have 
\begin{equation}
\label{Eq : claim1}
	x=\sum_{m=0}^\infty\frac{\psi(\eta_m)}{\gamma^{m+1}}.
\end{equation} 
Consider $x\in \Q\cap [0,1)$. By hypothesis, we have $x\in \Per(\B)$. Hence, by Proposition~\ref{Pro : Equivalences1}, the sequence $(\eta_m)_{m\in\N}$ is ultimately periodic. Since $x=\psi(x)$ and since $\beta>1$ and $|\gamma|>1$, the claim follows by using Lemma~\ref{Lem:psi-periodique}.
 
Now, for all integers $M$ large enough so that $\frac{1}{\pr}+\frac{1}{\pr^M}<\frac{1}{\beta_0\cdots\beta_{p-2}}$ and for all real numbers $x$ in the interval $[\frac{1}{\pr},\frac{1}{\pr}+\frac{1}{\pr^M})$, we have $\eta_0=f_{\B}(0,\ldots,0,1)=1$ and $\eta_1=\cdots=\eta_{M-1}=f_{\B}(0,\ldots,0)=0$. Any interval of the form $[\frac{1}{\pr},\frac{1}{\pr}+\frac{1}{\pr^M})$ contains a rational number $x$, and for each such $x$, we get from~\eqref{Eq : x sum eta} and~\eqref{Eq : claim1} together with the previous observation that
\[	
	\frac{1}{\pr}+ \sum_{m=M}^\infty\frac{\eta_m}{\pr^{m+1}}
	=\frac{1}{\gamma}+\sum_{m=M}^\infty\frac{\psi(\eta_m)}{\gamma^{m+1}}.
\]
Setting $C=\max\{\eta : \eta\in\Dig(\B)\}$ and $D=\max\{|\psi(\eta)| : \eta\in\Dig(\B)\}$, we get that
\[
	\left|\frac{1}{\gamma}-\frac{1}{\pr}\right|
	\le \sum_{m=M}^\infty\frac{\eta_m}{\pr^{m+1}}
	+ \left|\sum_{m=M}^\infty\frac{\psi(\eta_m)}{\gamma^{m+1}}\right|
	\le \frac{C}{\pr^M(\pr-1)} + \frac{D}{|\gamma|^M(|\gamma|-1)}
\]
for all large enough $M$ and all $x\in\Q\cap [\frac{1}{\pr},\frac{1}{\pr}+\frac{1}{\pr^M})$. Since the last upper bound is independent of the chosen $x$, by letting $M$ tend to infinity we obtain that $\gamma=\pr$. This proves that $\pr$ is either a Pisot number or a Salem number.
\medskip

We now turn to the second item of the statement. Suppose that $\pr$ is a Pisot number and that the bases $\beta_0,\ldots,\beta_{p-1}$ belong to $\Q(\pr)$. 

First, we prove that $\Per(\B)\subseteq \Q(\pr)\cap [0,1)$. Let $x\in \Per(\B)$. By Proposition~\ref{Pro : Equivalences1}, the sequence $(\eta_m)_{m\in \N}$ is ultimately periodic, say with preperiod $s$ and period $t$. Then
\[
	x
	=\sum_{m=0}^\infty\frac{\eta_m}{\pr^{m+1}}
	=\sum_{m=0}^{s-1}\frac{\eta_m}{\pr^{m+1}} 
	+ \left(\sum_{m=s}^{s+t-1}\frac{\eta_m}{\pr^{m+1}}\right) 
	\frac{\pr^t}{\pr^t-1}.
\]
Since $\Q(\pr)$ is a field and the digits $\eta_m$ belong to $\Q(\pr)$, we get that $x\in \Q(\pr)\cap [0,1)$. 

Second, we turn to the converse inclusion. Let $x\in \Q(\pr)\cap [0,1)$. Proceed by contradiction and suppose that $x\notin\Per(\B)$. Then the sequence $(r_{mp})_{m\in\N}$ is injective by Proposition~\ref{Pro : Equivalences1}. By~\eqref{Eq : Prop5}, we obtain that $r_{mp}$ belongs to the set 
\[
	X^{\Delta}(\pr)
	=\left\{\sum_{\ell=0}^{m-1}a_\ell\pr^{m-1-\ell}
	: m\in\N,\ a_0,\ldots,a_{m-1}\in\Delta\right\}
\] 
for all $m\in\N$, where $\Delta=-\Dig(\B)\cup\Dig(\B)\cup\{x\}$. The set $X^{\Delta}(\pr)$ is called the $\beta$-\emph{spectrum} over the real digit set $\Delta$. Since the remainders $r_{mp}$ belong to $[0,1)$, the spectrum $X^{\Delta}(\pr)$ has an accumulation point in $\R$. By~\cite[Proposition 24]{CharlierCisterninoMasakovaPelantova2023}, which is indeed valid for all finite digit sets included in $\Q(\pr)$, we get that either $\pr$ is not a Pisot number or there exists $i\in\Int$ such that $\beta_i\notin\Q(\pr)$. But this is in contradiction with our hypotheses.
\end{proof}

We now show how Corollary~\ref{Cor : PisotimpliqueParry} may be deduced from Theorem~\ref{Thm : OurSchmidt}.

\begin{proof}[Proof of Corollary~\ref{Cor : PisotimpliqueParry}]
Suppose that $\beta$ is a Pisot number and that $\beta_0,\ldots,\beta_{p-1}\in\Q(\beta)$.  Although Theorem~\ref{Thm : OurSchmidt} deals with real numbers in $[0,1)$, we may easily use it in order to show the ultimate periodicity of $\qDBi{i}(1)$ for all $i\in\Int$. Equivalently, we show that $\DBi{i}(1)$ is ultimately periodic for all $i\in\Int$. Since $\DBi{i}(1)=\floor{\beta_i}\DBi{i+1}(\beta_i-\floor{\beta_i})$, this property is equivalent to the fact that $\beta_i-\floor{\beta_i}\in\Per(\B^{(i+1)})$ for all $i\in\Int$. But from the hypotheses, we know that $\beta_i-\floor{\beta_i}\in \Q(\pr)\cap[0,1)$ for all $i\in\Int$. Hence the result follows from Theorem~\ref{Thm : OurSchmidt}. \end{proof}

\begin{remark}
Let us illustrate that the item~\eqref{OurSchmidt:2} of Theorem~\ref{Thm : OurSchmidt} cannot be extended to non-greedy representations, i.e., if $\pr$ is a Pisot number and $\beta_0,\ldots,\beta_{p-1} \in \Q(\pr)$ then the set of real numbers in $[0,1)$ having ultimately periodic $\B$-representations with respect to an algorithm different from the greedy one need not be equal to $\Q(\pr)\cap [0,1)$. We consider again the example introduced in Remark~\ref{rem:thue-morse}.  We know that $\frac{\varphi}{2}=\frac{\varphi^3-1}{4}\in\Q(\varphi^3)$. However, the infinite word $f(t)$ is a non ultimately periodic $\B$-representation  of $\frac{\varphi}{2}$ although $\varphi^3$ is a Pisot number. 
\end{remark}

\section{Proof of Theorem~\ref{Thm:stronger}}
\label{Sec:stronger}

Finally, we are able to obtain a much stronger statement than the first item of Theorem~\ref{Thm : OurSchmidt}.

\begin{proof}[Proof of Theorem~\ref{Thm:stronger}]
Suppose that $\pr$ is an algebraic integer that is neither a Pisot number nor a Salem number. Thus, there exists a Galois conjugate $\gamma$ of $\pr$ such that $\gamma\ne\pr$ and $|\gamma|>1$. For $x\in[0,1)$, define 
\[
	E(x)=\left\{n\in\N : 
	\sum_{m=0}^{n-1}\frac{\eta_m(x)}{\pr^{m+1}}
	\ne \sum_{m=0}^{n-1}\frac{\psi(\eta_m(x))}{\gamma^{m+1}}
	\right\}
\] 
where the digits $\eta_m(x)$ are defined as in~\eqref{Eq : x sum eta} and $\psi$ is the $\Q$-isomorphism from $\Q(\pr)$ to $\Q(\gamma)$ such that $\psi(\pr)=\gamma$. 

Let us first show that the set
\[
	F=\{x\in[0,1) : E(x)\text{ is infinite}\}.
\] 
is dense in $[0,1)$. Let $x$ be in $[0,1)$ and let $\varepsilon$ be a positive real number. We construct an element $y$ of $F$ such that $|x-y|<\varepsilon$. Let $c,d\in\N$ be such that $c<d$, $\pr^d\neq \gamma^d$, $\frac{1}{\pr^d}<\frac{\varepsilon}{2}$ and $\frac{C}{\pr^c(\pr-1)}<\frac{\varepsilon}{2}$ where $C=\max\{\eta : \eta\in\Dig(\B)\}$. As a first case, suppose that $c\in E(x)$. In this case, we let
\[
	y=\sum_{m=0}^{c-1} \frac{\eta_m(x)}{ \pr^{m+1}}.
\] 
Then $\eta_m(y)=\eta_m(x)$ for all $m\in[\![0,c-1]\!]$ and $\eta_m(y)=0$ for all $m\ge c$. Therefore, the set $E(y)$ contains all integers $n\ge c$, and hence $y\in F$. Now, suppose that $c\notin E(x)$. Then we let
\[
	y=\sum_{m=0}^{c-1} \frac{\eta_m(x)}{ \pr^{m+1}}+\frac{1}{\pr^d}.
\] 
In this case, for $m\in[\![0,c-1]\!]$, we have $\eta_m(y)=\eta_m(x)$ and for $m\ge c$, we have $\eta_m(y)=0$ if $m\ne d-1$ and $\eta_{d-1}(y)=1$. We obtain that $n\in E(y)$ for all $n\ge d$, hence $y\in F$ in this case as well. In both cases, we have found the desired element $y$ of $F$ since 
\[
	|x-y|
	\le \sum_{m=c}^{\infty} \frac{\eta_m(x)}{ \pr^{m+1}}+\frac{1}{\pr^d}
	\le\frac{C}{\pr^c(\pr-1)}+\frac{1}{\pr^d}
	<\frac{\varepsilon}{2}+\frac{\varepsilon}{2}
	=\varepsilon.
\]
 
Now, since for all $x\in F$ and $\varepsilon>0$, there exists $n\in E(x)$ such that 
\[
	0\le x-\sum_{m=0}^{n-1}\frac{\eta_m(x)}{\pr^{m+1}}<\varepsilon,
\]
we get that the set
\[
	G=\left\{\sum_{m=0}^{n-1}\frac{\eta_m(x)}{\pr^{m+1}} : 
	x\in [0,1),\ n\in E(x)\right\}
\]
is dense in $[0,1)$ as well. In order to see that $\Per(\B)\cap\Q$ is nowhere dense in $[0,1)$, it then suffices to show that for all $g\in G$, there exists $N\in\N$ such that 
\[
	\left[g,g+\frac{1}{\pr^N}\right)\cap \Per(\B)\cap\Q\cap[0,1)=\emptyset.
\]

Proceed by contradiction and suppose that there exist $x\in [0,1)$ and $n\in E(x)$ such that for all $N\in\N$, there exists $y\in\Per(\B)\cap\Q\cap[0,1)$ such that 
\begin{equation}
\label{Eq : y2}
	0\le y-\sum_{m=0}^{n-1}\frac{\eta_m(x)}{\pr^{m+1}}<\frac{1}{\pr^N}.
\end{equation}
We consider such $x$ and $n$. Let now  $N\ge n$ and $y\in\Per(\B)\cap\Q\cap[0,1)$ such that~\eqref{Eq : y2} holds. Then $\eta_m(y)=\eta_m(x)$ for all $m\in[\![0,n-1]\!]$ and $\eta_m(y)=0$ for all $m\in[\![n,N-1]\!]$. As in the proof of the item~\eqref{OurSchmidt:1} of Theorem~\ref{Thm : OurSchmidt}, we obtain that
\[
	y=\sum_{m=0}^\infty \frac{\psi(\eta_m(y))}{\gamma^{m+1}}.
\]
We then get
\[
	\left|\sum_{m=0}^{n-1} \frac{\eta_m(x)}{\pr^{m+1}}
	-\sum_{m=0}^{n-1} \frac{\psi(\eta_m(x))}{\gamma^{m+1}}\right|
	=\left|\sum_{m=N}^\infty \frac{\eta_m(y)}{\pr^{m+1}}
	-\sum_{m=N}^\infty \frac{\psi(\eta_m(y))}{\gamma^{m+1}}\right|
	\le\frac{C}{\pr^N(\pr-1)}
	+\frac{D}{|\gamma|^N(|\gamma|-1)}
\]
where $C=\max\{\eta : \eta\in\Dig(\B)\}$ and 
$D=\max\{|\psi(\eta)| : \eta\in\Dig(\B)\}$. Since $N$ can be chosen arbitrarily large, we obtain that 
\[
	\sum_{m=0}^{n-1} \frac{\eta_m(x)}{\pr^{m+1}}
	=\sum_{m=0}^{n-1} \frac{\psi(\eta_m(x))}{\gamma^{m+1}},
\]
contradicting that $n\in E(x)$.
\end{proof}

\section{Reduction to the case of real base expansions}
\label{Sec:p=1}

In this section, we emphasize that our proof of Schmidt's results, i.e., when taking $p=1$ in Theorem~\ref{Thm : OurSchmidt}, is in fact much simpler that the original one, which relies on several technical lemmas; see~\cite[Theorems 2.4 and 3.1]{Schmidt1980}. In order to do this, we explicitly give the reduction of our proof to this case.

\begin{proof}[New proof of Schmidt's theorems]
We consider the case where $p=1$ only. So the notation  $\B$, $\beta_0$ and $\beta$ now coincide; so we simply  write $\beta$ for any of them. We start with the first item. Suppose that $\Q\cap [0,1)\subseteq \Per(\beta)$. In particular, there exists an ultimately periodic sequence of integers whose $\beta$-value is of the form $\frac{1}{q}$ with $q\in\N_{\ge 1}$. Therefore, $\beta$ satisfies an equality of the form
\[
	\frac{1}{q}=\sum_{n=0}^{s-1}\frac{a_n}{\beta^{n+1}}		
	+\left(\sum_{n=s}^{s+t-1} \frac{a_n}{\beta^{n+1}}\right)
	\frac{\beta^t}{\beta^t-1}
\] 
where $s\in\N$ and $t\in\N_{\ge 1}$. Multiplying the two sides of this equality by $q\beta^s(\beta^t-1)$, we see that $\beta$ is an algebraic integer. Now, let $\gamma$ be a Galois conjugate of $\beta$ such that $|\gamma|>1$ and let $\psi$ be the $\Q$-isomorphism from $\Q(\beta)$ to $\Q(\gamma)$ defined by $\psi(\beta)=\gamma$. 

First, we claim that for any $x\in \Q\cap [0,1)$, we have 
\begin{equation}
\label{Eq : claim2}
	x=\sum_{n=0}^\infty\frac{\varepsilon_n}{\gamma^{n+1}}.
\end{equation} 
Consider $x\in \Q\cap [0,1)$. By hypothesis, we have $x\in \Per(\B)$. Hence the sequence $(\varepsilon_n)_{n\in\N}$ is ultimately periodic. 
Since $x=\psi(x)$ and since $\beta>1$ and $|\gamma|>1$, the claim follows by using Lemma~\ref{Lem:psi-periodique}.

Now, observe that if a real number $x$ belongs to an interval of the form $[\frac{1}{\beta},\frac{1}{\beta}+\frac{1}{\beta^N})$ with $N\in\N_{\ge 1}$, then $\varepsilon_0=1$ and $\varepsilon_1=\cdots=\varepsilon_{N-1}=0$.  For all $N\in\N_{\ge 1}$, the interval $[\frac{1}{\beta},\frac{1}{\beta}+\frac{1}{\beta^N})$ contains a rational number $x$, and for each such $x$, we get from~\eqref{Eq : claim2} and the previous observation that
\[	
	\frac{1}{\beta}+ \sum_{n=N}^\infty\frac{\varepsilon_n}{\beta^{n+1}}
	=\frac{1}{\gamma}+\sum_{n=N}^\infty\frac{\varepsilon_n}{\gamma^{n+1}}.
\]
We get that
\[
 	\left|\frac{1}{\gamma}-\frac{1}{\beta}\right|
	\le \sum_{n=N}^\infty\frac{\varepsilon_n}{\beta^{n+1}}
	+ \left|\sum_{n=N}^\infty\frac{\varepsilon_n}{\gamma^{n+1}}\right|
	\le \frac{\ceil{\beta}-1}{\beta^N(\beta-1)} + \frac{\ceil{\beta}-1}{|\gamma|^N(|\gamma|-1)}
\]
for all $N\in\N_{\ge 1}$ and $x\in\Q\cap [\frac{1}{\beta},\frac{1}{\beta}+\frac{1}{\beta^N})$. Since the right-most upper bound does not depend on the chosen $x$, by letting $N$ tend to infinity we obtain that $\gamma=\beta$. This proves that $\beta$ is either a Pisot number or a Salem number.
\medskip

We now turn to the second item of the statement. Suppose that $\beta$ is a Pisot number.

First, we prove that $\Per(\beta)\subseteq \Q(\pr)\cap [0,1)$. Let $x\in \Per(\beta)$. Then $(\varepsilon_n)_{n\in \N}$ is ultimately periodic, say with preperiod $s$ and period $t$. Then
\[
	x
	=\sum_{n=0}^\infty\frac{\varepsilon_n}{\beta^{n+1}}
	=\sum_{n=0}^{s-1}\frac{\varepsilon_n}{\beta^{n+1}} 
	+ \left(\sum_{n=s}^{s+t-1}\frac{\varepsilon_n}{\beta^{n+1}}\right) 
	\frac{\beta^t}{\beta^t-1}.
\]
Since $\Q(\pr)$ is a field and the digits $\varepsilon_n$ are integers, we get that $x\in \Q(\pr)\cap [0,1)$. 

Second, we turn to the converse inclusion. Let $x\in \Q(\pr)\cap [0,1)$. Proceed by contradiction and suppose that $x\notin\Per(\B)$. Then the sequence of remainders $(r_n)_{n\in\N}$ is injective. By~\eqref{Eq : Prop5}, we obtain that $r_n$ belongs to the spectrum 
\[
	X^{\Delta}(\beta)
	=\left\{\sum_{\ell=0}^{m-1}a_\ell\beta^{m-1-\ell}
	: m\in\N,\ a_0,\ldots,a_{m-1}\in\Delta\right\}
\] 
for all $n\in\N$, where $\Delta=[\![-\ceil{\beta}+1,\ceil{\beta}-1]\!]\cup\{x\}$. Since the remainders $r_n$ belong to $[0,1)$, the spectrum $X^{\Delta}(\pr)$ has an accumulation point in $\R$. By~\cite[Proposition 24]{CharlierCisterninoMasakovaPelantova2023}, 
which is indeed valid for all finite digit sets included in $\Q(\pr)$, 
we get that $\beta$ is not a Pisot number. But this is in contradiction with our hypothesis.
\end{proof}

\section{Open questions}
\label{Sec:future}

Schmidt conjectured that for real bases $\beta$ that are Salem numbers, we still have that $\Per(\pr)=\Q(\pr)\cap[0,1)$~\cite{Schmidt1980}. Even though some work has been done, this problem is still open today, as well as the partial problem to know whether all Salem numbers are Parry numbers; see for instance \cite{Boyd1989,Boyd1996,Hichri2014,Vavra2021}. Analogous questions can be asked in the alternate base framework. Namely, is it true that if the product of the bases $\pr=\prod_{i=0}^{p-1}\beta_i$ is a Salem number and if $\beta_0,\ldots,\beta_{p-1} \in \Q(\pr)$, then $ \Per(\B)=\Q(\pr)\cap [0,1)$? And more specifically, is it true that under the same conditions, the alternate base $\B$ is Parry?

In the first item of Theorem~\ref{Thm : OurSchmidt}, our assumption concerns the $p$ shifts $\B^{(i)}$ of the alternate base $\B$. However, we need information about the shifts $\B^{(i)}$ for $i\ne 0$ only when using Theorem~\ref{Thm : GeneralizationPrague}. We leave as an open question to know whether we need information about all the shifted bases or not. If we were able to improve Theorem~\ref{Thm : GeneralizationPrague} by proving that if $\Q\cap [0,1)\subseteq \Per(\B)$ then $\beta_0,\ldots,\beta_{p-1} \in \Q(\pr)$ and $\pr$ is an algebraic integer, then we would obtain that if $\Q\cap [0,1)\subseteq \Per(\B)$ then $\beta_0,\ldots,\beta_{p-1} \in \Q(\pr)$ and $\pr$ is either a Pisot number or a Salem number. Otherwise stated, an improvement of Theorem~\ref{Thm : GeneralizationPrague} would give rise to an   improvement of Theorem~\ref{Thm : OurSchmidt}.

\section{Acknowledgment}
We thank the referee for helpful suggestions.
Émilie Charlier is supported by the FNRS grant J.0034.22.
Célia Cisternino is supported by the FNRS Research Fellow grant 1.A.564.19F.
Savinien Kreczman is supported by the FNRS Research Fellow grant 1.A.789.23F.

\bibliographystyle{abbrv}
\bibliography{Bibliography}

\end{document}